\theoremstyle{plain}
\newtheorem{thm}{Theorem}[section]
\newtheorem{lemma}[thm]{Lemma}
\newtheorem{cor}[thm]{Corollary}
\newtheorem{prop}[thm]{Proposition}
\theoremstyle{definition}
\newtheorem{definition}[thm]{Definition}
\newtheorem{remark}[thm]{Remark}
\font\sevenrm=cmr7
\newcommand\EE{\mathbb{E}}
\newcommand\NN{\mathbb{N}}
\newcommand\RR{\mathbb{R}}
\newcommand\TT{\mathbb{T}}
\newcommand\ZZ{\mathbb{Z}}
\newcommand{\SE}{{\mathscr E}}
\newcommand{\SL}{{\mathscr L}}
\newcommand{\cH}{\mathcal{H}}
\newcommand{\cE}{\mathcal{E}}
\newcommand{\lng}{\langle}
\newcommand{\rng}{\rangle}
\newcommand{\na}{\nabla}
\newcommand{\oo}{\omega}
\newcommand{\si}{\sigma}
\newcommand{\al}{\alpha}
\newcommand{\vep}{\varepsilon}
\newcommand{\vph}{\varphi}
\newcommand{\de}{\delta}
\newcommand{\tvp}{\Phi}
\newcommand{\tvs}{\Psi}
\newcommand{\bv}{\big\vert\!\big\vert}
\newcommand{\Label}[1]{\label{#1}}
\def\mathpal#1{\mathop{\mathchoice{\text{\rm #1}}%
   {\text{\rm #1}}{\text{\rm #1}}%
   {\text{\rm #1}}}\nolimits}
\def\Div{\mathpal{div}}
\def\id{\mathpal{id}}
\def\trace{\mathpal{tr}}
\def\di{\displaystyle}
\def\f{\frac}
\def\a{\alpha }
\def\b{\beta }
\def\D{\Delta }
\def\d{\delta }
\def\e{\varepsilon }
\def\g{\gamma }
\def\n{\nabla }
\def\Om{\Omega }
\def\om{\omega }
\def\s{\sigma }
\begin{document}

\title[]{Generalized stochastic  flows and applications to incompressible viscous fluids}
\author[A. Antoniouk]{Alexandra Antoniouk} \address{Dep. Nonlinear Analysis
\hfill\break\indent Institute of Mathematics NAS Ukraine\hfill\break\indent
 Tereschchenkivska str, 3\hfill\break\indent
  Kyiv, 01 601 UKRAINE}
\email{antoniouk.a@gmail.com}
\author[M. Arnaudon]{Marc Arnaudon} \address{Institut de Math\'ematiques de Bordeaux \hfill\break\indent CNRS: UMR 5251\hfill\break\indent
  Universit\'e Bordeaux 1 \hfill\break\indent
  F33405 TALENCE Cedex, France}
\email{marc.arnaudon@math.u-bordeaux1.fr}

\author[A. B. Cruzeiro]{Ana Bela Cruzeiro} \address{GFMUL and Dep. de Matem\'atica IST(TUL). \hfill\break\indent
  Av. Rovisco Pais\hfill\break\indent
  1049-001 Lisboa, Portugal}
\email{abcruz@math.ist.utl.pt}


%
%

\begin{abstract}\noindent
 We introduce a notion of generalized stochastic flows on manifolds, that extends to the viscous case the one defined by Brenier for perfect fluids. Their kinetic energy extends the  classical kinetic energy to Brownian flows, defined as the $L^2$ norm of their drift. We prove that there exists a generalized flow which realizes the infimum of the  kinetic energy among all generalized flows  with prescribed initial and final configuration. We also construct generalized flows with prescribed drift and kinetic energy smaller  than the $L^2$ norm of the drift.

The results are actually presented for general $L^q$ norms, thus including not only the Navier-Stokes equations but also other equations such as the porous media. \rm

\end{abstract}

\maketitle
\tableofcontents

%
%

\section{Introduction}\label{Section1}
\setcounter{equation}0

Consider the Euler equations describing the velocity 
of incompressible non viscous (perfect) fluids,

\begin{equation}
 \label{E4}
 {\frac{\partial}{\partial t}} u= -(u.\nabla )u  -\nabla p,~~~~ \Div u=0.
\end{equation}

  V. Arnold showed that \rm the corresponding Lagrangian flows $g(t)$, namely the integral curves for $u$, solutions of $\frac{d}{dt} g(t)(x)=u(t, g(t)(x))$, $g(0)(x)=x$, can be characterized as geodesics on the infinite dimensional "manifold" of measure preserving diffeomorphisms of the underlying configuration space (c.f. \cite{Arnold:66}, \cite{Arnold-Khesin:98}). In particular such solutions $g(t)$ minimize the following energy functional, defined in the time interval $[0,T]$,

\begin{equation}
 \label{E5}
S[g]=\frac{1}{2} \int_0^T \int \left|\frac{d}{dt}g(t)(x)\right|^2 dx
\end{equation}
and the corresponding Euler-Lagrange  equations are precisely equations (1.1).

Ebin and Marsden (\cite{Ebin-Marsden:70})  proved that, given a final condition $g(T)$ with some suitable Sobolev regularity (and, in particular, smooth) lying in a small  neighborhood of the identity,  existence and uniqueness of a local minimal geodesic can be obtained. However, in general, there may be situations where such a geodesic is not defined (c.f.\cite{Shnirelman:85}). The main difficulty lies in that the topology induced by the energy is not strong enough to deal with the regularity of the maps.

  To overcome such difficulties Y. Brenier introduced in  \cite{Brenier:89} \rm the notion of generalized solutions for the   minimal action principle, in the spirit of the Monge-Kantarovich problem. These solutions are probability measures defined on the set of Lagrangian  trajectories. This weaker variational approach
allows to consider measure-preserving maps $g(t)$ with possible splitting and crossing during the evolution.
He proved, in particular, that classical solutions can be regarded as  generalized solutions and that there exist
generalized solutions which do not correspond to classical flows.

In the viscous case, where the velocity obeys the 
 Navier-Stokes system
\begin{equation}
 \label{E1bis}
{\frac{\partial}{\partial t}} u= -(u.\nabla )u +\nu \Delta u  -\nabla p,~~~~ \Div u=0
\end{equation}
with a viscosity coefficient $\nu >0$, it is not so clear how to define a corresponding variational principle. 
Following the initial ideas  in \cite{Nakagomi:81} and \cite{Yasue:83} we have considered a stochastic variational principle defined on stochastic Lagrangian flows. The   kinetic \rm  energy is defined for these flows and we have characterized the stochastic processes which are critical for the energy as processes whose drift satisfies Navier-Stokes equation (c.f. \cite{Cipriano-Cruzeiro:07} for flows living on the flat torus and   \cite{Arnaudon-Cruzeiro:10} for flows in a general compact 
  Riemannian manifold).   Formally, when the viscosity coefficient vanishes, the Lagrangian flows become deterministic and  we are back to the Arnold characterization of Euler Lagrangian paths.\rm 

  We mention that another approach to Lagrangian trajectories for the Navier-Stokes equation as geodesics in a different  geometric framework was considered in \cite{Watanabe:81} and \cite{Gl:97}. There the  approach  is deterministic: what is deformed to pass from Euler to Navier-Stokes is the geometry. Other related approaches are considered in \cite{Constantin-Iyer:08}  and \cite{Eyink:10}.

  Actually these ideas are more general: replacing power~$2$ by some $q\ge 2$ in the energy functional, the stochastic processes on the flat torus which are critical for the energy have a drift satisfying   the weighted porous media type
equation 
  \begin{equation}
 \label{E1ter}
{\frac{\partial}{\partial t}} u= \left(-u.\nabla  +\nu \Delta\right)(\|u\|^{q-2} u)  -\nabla p,~~~~ \Div ~u=0
\end{equation}
(see \cite{DNS} and \cite{DGGW} for these equations).
This was recently proved in ~\cite{Antoniouk-Arnaudon:13}.\rm

     So, in the spirit of Brenier's work, we introduce and study here a concept of generalized Lagrangian flows for the Navier-Stokes problem. We present our results for general $L^q$ norms,
thus including other applications (Navier-Stokes corresponding to the case where $q=2$).
\rm

  The abovementionned difficulties encountered in the Euler case to prove existence of critical paths remain in this setting.

\section{Generalized stochastic flows}\label{Section2}
\setcounter{equation}0

Let us consider any compact oriented Riemannian manifold $M$ of dimension $N\ge 2$   and without boundary. We shall use notation $dx$ for integration with respect to the normalized volume measure on $M$.  Let $\cH$ be a Hilbert space.\rm

\begin{definition}
\Label{Stochflow}
 An  {\it It\^o stochastic flow} $g_{{t}}(x)(\oo )$ on $M$, $x\in M$,  $t\in[0,T]$, $T>0$ is a stochastic process which satisfies
 \begin{itemize}
\item[1.] $g_{{0}}(x)( \oo ) {=} x$;
\item[2.] $g_{{t}}(x)( \oo)$ is a semimartingale which satisfies the It\^o {stochastic} equation:
\begin{equation}\Label{SDE}\ \ 
dg_{{t}}(x) (\oo )= \si (g_{{t}}(x)( \oo )) dW_t +u_t( g_{{t}}(x) (\oo ),\oo) \, dt,
\end{equation}
where $\si\in\Gamma(Hom(\cH, TM))$ is a $C^2$-map satisfying for all $x\in M$ $(\si\si^\ast)(x)=\id_{T_xM}$; $W_t$ is a cylindric Brownian motion in the  Hilbert space $\cH$, and $(t,x,\oo)\to u_t(x,\oo)\in T_xM$ is a time dependent adapted drift with locally bounded variation in $x$. 
\end{itemize}
\end{definition}

  Frequently we shall drop the probability space parameter in the notations. \rm

 Recall that if $P(g(x))_t : T_xM\to T_{g(t)(x)}M$ is the parallel transport along $g_t(x)$, then $$dg_t(x)= P(g(x))_td\left(\int_0^\cdot P(g(x))_s^{-1}\circ dg_s(x)\right)_t.$$ 
  Let us mention that at this stage we do not ask for any further regularity of  $u_t(x,\oo)$. The solution to equation~\eqref{SDE} is not unique in general, but for any given $x\in M$ the process $u_t( g_{{t}}(x),\oo)$  is entirely determined
   by the process $g_{{t}}(x)(\om)$: in a local chart, after removing the term coming from Christoffel symbols, $u_t( g_{{t}}(x),\cdot)$ is the time derivative of the drift of $g_t(x)$.

\begin{definition}
\Label{Incdiffflow}
An {\it incompressible 
 stochastic flow}
$g_t(x)(\om)$ is a stochastic flow such that a.s. $\oo$ for all $t\ge 0$, the map $x\mapsto g_t(x)(\om)$ is a volume preserving diffeomorphism of~$M$.
\end{definition}

\begin{prop}
\Label{trampampam}
Assume that the following properties are satisfied:
\begin{itemize}
\item[(i)] $u_t(x,\oo)$ is jointly continuous in $t$ and $x$ and $C^2$ in $x$ with derivatives uniformly bounded in $\om$;
\item[(ii)] almost surely for all $t\in [0,T]$: $\Div\, u_t(\cdot,\oo) =0$;
\item[(iii)] $\text{tr}\ \na_{\si(\cdot)}\si(\cdot)=0$;
\item[(iv)] for all $v\in\cH$:\  $\text{div}\, \si (v)(\cdot)=0$.
\end{itemize}
Then the It\^o stochastic flow $g_t$ is incompressible, namely  $\oo$ a.s., for all $t\in [0,T]$,  for all $f\in C(M)$
\begin{equation}\Label{int}\ 
\int\limits_M f\big(g_{t}(x)(\oo)\big) \, dx = \int_M f(x)\, dx.
\end{equation}
\end{prop}
  Concerning assumptions i) and (ii) on the regularity of $u$, we refer to  ~\cite{Lee:11} for possible more general conditions (but in this reference $u$ does not depend on~$\oo$).\rm

\begin{proof}
Under the condition of the proposition almost surely for all $t$ the map $x\mapsto g_t(x)(\om)$ is a diffeomorphism of $M$. So for $f\in C(M)$
\begin{equation}\Label{int2}\ 
\int\limits_M f\big(g_{t}(x)(\oo)\big) \, dx =\int\limits_Mf(y)|\det T_yg_t^{-1}(\cdot)(\oo)|\,dy
\end{equation}
where $T_yg_t^{-1}(\cdot)(\oo)$ is the tangent map of $y\mapsto g_t^{-1}(y)(\oo)$. On the other hand
condition (iii) implies that equation~\eqref{SDE} is equivalent to the Stratonovich equation
\begin{equation}
\Label{SDES}
dg_{{t}}(x) (\oo )= \si (g_{{t}}(x)( \oo )) \circ dW_t +u_t( g_{{t}}(x) (\oo ),\oo) \, dt.
\end{equation}
 Derivating this equation with respect to the starting point yields
 \begin{equation}
 \Label{DSDES}
d\left(P(g(x))_t^{-1}T_xg_{{t}}\right) =P(g(x))_t^{-1} \left(\n_{T_xg_t}\si  \circ dW_t +\n_{T_xg_t}u_t\, dt\right)
\end{equation}
where again $P(g(x))_t$ denotes parallel transport along $g_t(x)$. Then using the fact that 
$$
T_{g_t(x)}g_{{t}}^{-1} P(g(x))_t P(g(x))_t^{-1}T_xg_{{t}}\equiv \id_{T_xM}
$$
we get 
\begin{equation}
 \Label{invDSDES}
 \begin{split}
&d\left(T_{g_t(x)}g_{{t}}^{-1} P(g(x))_t\right)\\& =T_{g_t(x)}g_{{t}}^{-1} P(g(x))_t P(g(x))_t^{-1} \left(-\n_{P(g(x))_t}\si  \circ dW_t -\n_{P(g(x))_t}u_t\, dt\right)
\end{split}
\end{equation}
and this yields
\begin{equation}
 \Label{trinvDSDES}
 \begin{split}
&d\left(\det T_{g_t(x)}g_{{t}}^{-1} P(g(x))_t\right)\\& =\det T_{g_t(x)}g_{{t}}^{-1} P(g(x))_t \trace\left\{P(g(x))_t^{-1} \left(-\n_{P(g(x))_t}\si  \circ dW_t -\n_{P(g(x))_t}u_t\, dt\right)\right\}\\
&=\det T_{g_t(x)}g_{{t}}^{-1} P(g(x))_t  \left(-\Div\si(g_t(x)))  \circ dW_t -\Div u_t(g_t(x)))\, dt\right)\\
&=0.
\end{split}
\end{equation}
With the initial condition $\det T_xg_{{0}}^{-1} P(g(x))_0=1$ and the property $\det P(g(x))_t\equiv1$ we conclude that 
\begin{equation}
\Label{det1}
\det T_{g_t(x)}g_t^{-1}(\cdot)(\oo)\equiv 1
\end{equation}
almost surely for all $t$ and $x\in M$. But almost surely the map $x\mapsto g_t(x)(\om)$ is a diffeomorphism of $M$, so almost surely for all $t$ and $y\in M$
\begin{equation}
\Label{det1bis}
\det T_{y}g_t^{-1}(\cdot)(\oo)\equiv 1
\end{equation}
  As a conclusion,
$$
\int\limits_M f\big(g_{t}(x)(\oo)\big) \, dx = \int_M f(y)\, dy.
$$
\end{proof}
\begin{remark}

In the following, when looking for incompressible flows, conditions (iii) and (iv) of Proposition~\ref{trampampam} will be unavoidable.

Our main example concerns $M=\TT=\RR/2\pi\ZZ\times\RR/2\pi\ZZ $ the two dimensional torus, $H$ the Hilbert space of real-valued sequences indexed by $\ZZ^2\oplus \ZZ^2$, $\s$ the map defined by 
\begin{equation}
 \label{Torus1}
\s((k_1,k_2)+(0,0))(\theta)=:A_{(k_1,k_2)}(\theta)=(k_2,-k_1)\cos k\cdot \theta
\end{equation}
and 
\begin{equation}
 \label{Torus2}
\s((0,0)+(k_1,k_2))(\theta)=:B_{(k_1,k_2)}(\theta)=(k_2,-k_1)\sin k\cdot \theta.
\end{equation}

Another example is given by any compact semi-simple Lie group $G$ endowed with the metric given by the opposite of the Killing form. Then one can take $H=T_eG$ where $e$ is the identity of $G$ and for $x\in G$, $\s(x)=TL_{x}$ with $L_x$ the left translation.

In the same spirit many examples can be constructed  by projection on symmetric spaces of compact type.
\end{remark}
\begin{definition}
\Label{Drift} For an It\^o stochastic flow $g_t$ satisfying
$$
dg_t= {\si}_t(g_t) dW_t+  u_t(g_t,\oo) dt,
$$ 
we define the {\it drift} as 
$$
D g_t (\oo):=  u_t(g_t,\oo).
$$
  Let us define for $q>1$  the {\it q-energy functional} {by}
\begin{equation}\Label{Eq}\ 
\cE_q(g):=\f1q\EE\left[\int_0^T\int_{M} \left\|Dg_t(x)(\om)\right\|^q\, dx\, dt\right].
\end{equation}
\end{definition}

When $q=2$  the $2$-energy corresponds to  the kinetic energy in hydrodynamics.\rm

As we remarked before, the drift and the energy are uniquely determined by the stochastic flow $g_t$.

{Now we would like to give some explanation and motivation for further definitions.} For the {It\^o} incompressible stochastic flow $g_{ {t}}(x)$, consider the bilinear map $\Theta_t^g$, which to two elements $\vph,\psi\in L_2(M)$  associates  the process:
\begin{equation}
\Label{Tht}\ 
\Theta_t^g (\vph,\psi) =\int\limits_M \vph(x) \psi\big(g_{{t}}(x)\big) dx.
\end{equation}

\begin{definition}
We call the  {bilinear} map $\Theta_t^g$ {\it a  g-flow associated to  {the It\^o stochastic flow} $g_{ {t}}$}.
\end{definition}
{It is easy to see that} for  {fixed} $\vph,\psi\in C^\infty(M)$  {the flow} $\Theta_t^g$ is a real valued semi-martingale process which satisfies 
\begin{eqnarray}\Label{2.11}\ 
\Theta_t^g(\vph,\psi)&=& (\vph,\psi)_{L_2(M)} 
+\sum\limits_{i=1}^\infty \int\limits_0^t  
\Theta_s^g\big(\vph, 
\lng \na\psi,\si_i\rng\big)\, dW_s^i +\\
\nonumber
&+&\int\limits_0^t \Theta_s^g
\big(\vph,\lng \na \psi ,u_{ {s}}
(\cdot,\oo)\rng\big)\, ds
+\frac{1}{2}\int\limits_0^t \Theta_s^g \big(\vph, 
\Delta \psi\big) \, ds,
\end{eqnarray}
where for fixed orthonormal basis $e_i, i\geq 1$ in $\cH$  {we denote} $W_s^i=\lng W_s,e_i\rng$ and $\si_i = \si(e_i)$.

When $g_t$ is a flow of diffeomorphisms, then
\begin{equation}\label{10}
\Theta^{{g}}_t(\varphi,{\psi})=(\theta^\varphi(t,\cdot),{\psi})_{L^2(M)}
\end{equation}
where $x\mapsto \theta^\varphi(t,x)(\om)$ is the function $M\to \RR$ defined by 
\begin{equation}\label{E11}
\theta^\varphi(t,x)=\varphi\left((g(t)(\cdot)(\om))^{-1}(x)\right).
\end{equation}
Notice that for $A$ a Borelian set in $M$ and $\varphi=1_A$ we have $\theta^\varphi(t,x)=1_{g(t)(A)}(x)$.

   It is not clear how to obtain the existence of critical points for our variational
principles within a space of laws of $g$-flows, which are measures supported on real
semimartingales with, in particular, $g(0)(x) = x$. We therefore introduce the
so-called  \it  generalized flows   which  include as particular cases the {$g$}-flows
 $\Theta_t^g$.
\rm

\begin{definition}
\Label{GF}
We call a bilinear map $\Theta_t (\cdot, \cdot)$  {\it a generalized flow} 
if to each $\vph, \psi \in C^\infty(M)$ it associates a continuous 
semi-martingale $t\mapsto \Theta_t(\vph,\psi)$ defined in a common filtered probability space for all $\vph,\psi$, and which for all $\varphi,
\psi, 
\vph_1, \psi_1 \in C^\infty(M)$ satisfies the following list of properties:
\begin{eqnarray*}
(1)& &
 \Theta_t(\varphi,1)\equiv\int_M\varphi(x)\,dx, \Label{E16}\\
 \nonumber
(2) & &  \Theta_t(1,\psi)\equiv\int_M\psi(x)\,dx, \quad
 \hbox{a.s. for all $t\in [0,T]$},\\
 \nonumber
 (3)& &
\Theta_0(\varphi,\psi)=(\varphi,\psi)_{L^2(M)};\\
 (4)& &\text{for $\vph, \psi \geq 0$ it is true that 
$\Theta_t(\vph,\psi) \ge 0$ a.s.;}  \Label{E17}\ \\
 \\
(5)& &  |\Theta_t(\varphi,\psi)|\le\|\varphi\|_{L^2(M)}\|\psi\|_{L^2(M)},\  \text{a.s. for all $t\in [0,T]$;}\\
(6)& & 
d\left[\Theta(\varphi,\psi),\Theta(\varphi_1,\psi_1)\right]_t=
\sum_{i\ge 1}\Theta_t\big(\varphi, \lng\na\psi, \s_i\rng\big)\cdot
\Theta_t \big(\varphi_1, \lng \na\psi_1, \s_i\rng \big)\,dt.\
\end{eqnarray*}
\end{definition}

{Let us notice that (5) and (6) imply}
\begin{equation}\Label{E25bis}\ 
 d\left[\Theta(\varphi,\psi),\Theta(\varphi,\psi)\right]_t\le \|\varphi\|_{L^2(M)}^2\cdot\left\|\na\psi\right\|_{L^2(M)}^2\,dt.
 \end{equation}

\begin{definition}
\label{drifttildeTheta}
For $\vph,\psi\in C^2(M)$, we introduce
\begin{equation}\Label{WTh}\ 
\widetilde{\Theta}_t (\vph,\psi) = \Theta_t(\vph,\psi) -\frac{1}{2}\int\limits_0^t \Theta_s (\vph,\Delta \psi) \, ds.
\end{equation}
If $\widetilde{\Theta}_t $ has absolutely continuous finite variation part, we denote 
 by  
$D\tilde\Theta(\varphi,\psi)$ its drift in the sense of Definition~\ref{Drift}, i.e. the time derivative of its finite variation part.
\end{definition}

\begin{definition}\Label{KE}\ Let $q>1$. We say that the generalized flow $\Theta_t(\cdot,\cdot)$ has {\it q-finite energy} iff 
\begin{itemize}
\item[(i)]\  the semi-martingale $\widetilde{\Theta}_t (\vph,\psi)$
has absolutely continuous finite variation part; 
\item[(ii)]\  the following functional is finite:
\begin{equation}\Label{Epr}\ 
\cE_q'(\Theta) =\frac{1}{q} \sup\limits_{\vph,\psi,\ell, m}\Bigg\{ \EE\int_0^T \sum\limits_{j=1}^m\left[ \sum\limits_{k=1}^\ell 
\frac{\big(D\widetilde{\Theta}_t(\vph_j,\psi_k)\big)^2}{\Theta_t(\vph_j,1)^\al}\right]^{q/2}dt\Bigg\},
\end{equation}
where $\di \a=\f{2(q-1)}q$, sup is taken on all vectors $\vph = (\vph_1,...,\vph_m)$, $\psi = (\psi_1,...,\psi_\ell)$ for any $m, \ell \geq 1$, such that $\vph_j, \psi_k\in C^\infty (M)$, $\vph_j\geq 0$, $\sum\limits_{j=1}^m\vph_j=1$ and $\psi_k$ are such that for all $v\in TM$: $\sum\limits_{k=1}^\ell \lng\na \psi_k, v\rng^2\leq ||v||^2$.
\end{itemize}
We call the functional $\cE_q'(\Theta)$ the {\it  q-energy functional} of the generalized flow $\Theta_t(\cdot,\cdot)$.
\end{definition}

Let $\eta$ be a probability measure on $M\times M$ with marginals equal to $dx$. In particular it disintegrates as $\eta (dx, dx) = dx\, \eta_x (dy)$.

\begin{definition}
\Label{endpointsconf}
The generalized flow $\Theta_t$ is said to {\it satisfy the  endpoints configuration $\eta$} if 
\begin{equation}
\Label{E15}\ 
 \EE\left[\Theta_T(\varphi,\psi)\right]=\int_{M
\times M}\varphi(x)\psi(y)\, \eta(dx,dy). 
\end{equation}
\end{definition}

\begin{prop}
\Label{checklistL}
Let $g_{{t}}(x)$ be an incompressible  {It\^o} stochastic flow  on $M$, satisfying the requirements of Proposition~\ref{trampampam}, such that for all $x\in M$, $g_{{T}}(x)$ has law $\eta_x$.
Then the  g-flow $\Theta^g_t$ associated to the {It\^o} stochastic flow $g_{{t}}$ by  formula (\ref{Tht}) is a generalized flow with  endpoints configuration~$\eta$.
\end{prop}
\begin{proof}
We need to check the requirements of the Definitions~\ref{GF} and {\ref{endpointsconf}}. The requirements (1) -- (5) of Definition~\ref{GF} are trivially fulfilled. For example, to prove (2) it is sufficient to note that
$$
\Theta_t^g (1,\psi) = \int\limits_M \psi (g_t(x)) \,dx = \int\limits_M \psi (x)\, dx.
$$
Property (6) may be easily calculated from (\ref{2.11}). Finally~\eqref{E15} follows from the fact that $g_T(x)$ has law~$\eta_x$.
\end{proof}
\begin{lemma}{\Label{Lemma4.6}}\ 
For a generalized flow $\Theta_t$, the following estimate holds true:
$$
\EE\int_0^T\,\left\vert D\tilde\Theta_t(\varphi,\psi)\right\vert^qdt
\le 2^qq\, \cE'_q(\Theta)\|\varphi\|_{L^\infty(M)}^q\left\|\na\psi\right\|_{L^\infty(M)}^q.
$$
\end{lemma}
\begin{proof}
We can assume that $\|\vph\|_\infty>0$ otherwise the left hand side is~$0$.
First assume that $\vph\ge 0$. Then $\int_M\vph>0$. So
\begin{align*}
&\EE\int_0^T\,\vert D\tilde\Theta_t(\varphi,\psi)\vert^qdt\\
&= \Vert \vph\Vert^q_\infty\cdot \Vert \na \psi\Vert^q_\infty \cdot \left(\int_M\frac{\vph(x)}{\Vert \vph\Vert_\infty}\,dx\right)^{\a q/2}\cdot
\EE \int\limits_0^T\f{\left\vert D\tilde\Theta\left( \frac{\vph}{\Vert \vph\Vert_\infty},
\frac{\psi}{\Vert\na\psi\Vert_\infty}\right)\right\vert^q}{\left(\int_M\frac{\vph}{\Vert \vph\Vert_\infty}\right)^{\a q/2}}\, dt
\\
&\leq\Vert \vph\Vert^q_\infty\cdot \Vert \na \psi\Vert^q_\infty \cdot q\, \cE'_q(\Theta).
\end{align*}
by definition of $\cE'_q(\Theta)$. 

For general $\vph$ we make the splitting into positive and negative parts: $\vph=\vph_+-\vph_-$. Then we write 
\begin{align*}
\vert D\tilde\Theta_t(\varphi,\psi)\vert^q&=\vert D\tilde\Theta_t(\varphi_+,\psi)-D\tilde\Theta_t(\varphi_-,\psi)\vert^q\\
&\le 2^{q-1}\left(\vert D\tilde\Theta_t(\varphi_+,\psi)\vert^q+\vert D\tilde\Theta_t(\varphi_-,\psi)\vert^q\right)
\end{align*}
then we are left to apply the first part of the proof, using the fact that $\|\vph_+\|_\infty\le\|\vph\|_\infty$ and $\|\vph_-\|_\infty\le \|\vph\|_\infty$.\end{proof}
\begin{thm}\Label{2var}\ Let $q>1$ and $g_t$ be an It\^o stochastic flow.
For the energy functionals $\cE'_q(\Theta^g)$ and $\cE_q(g)$
the following inequality is true
\begin{equation}\Label{Eprg}\ 
\cE'_q(\Theta^g)\leq \cE_q(g).
\end{equation}
\end{thm}
\begin{proof}\ 
If $\cE_q(g)=\infty$ then there is nothing to prove. So we assume that $\cE_q(g)<\infty$.
If we take the generalized flow $\Theta^g_t$, corresponding to the ordinary flow $g$ in sense of (\ref{Tht}), then
$$D\widetilde{\Theta}^{ {g}}_t(\vph_j,\psi_k)=\int\limits_M\vph_j(x)\big\lng u_{ {t}}\big(g_{ {t}}(x)\big), \na\psi_k\big(g_{ {t}}(x)\big)\big\rng_{T_xM} dx=:\int\limits_M \vph_j b_k dx$$
where 
\begin{equation}
\Label{bk}\ 
b_k=b_k(t,x)=\big\lng u_{ {t}}\big(g_{ {t}}(x)\big), \na\psi_k\big(g_{ {t}}(x)\big)\big\rng_{T_xM}.
\end{equation}
In this notations  we rewrite the kinetic energy functional $\cE'_q(\Theta^g)$ in the form:
\begin{eqnarray*}\Label{Eprr}\ 
\cE'_q(\Theta^g)
&=&\frac{1}{q} \sup\limits_{\vph,\psi,\ell, m}\Bigg\{ \EE\int_0^T \sum\limits_{j=1}^m\Big[ \sum\limits_{k=1}^\ell 
\frac{\big(\int\limits_M \vph_j(x)b_k(x)dx\big)^2}{\big(\int\limits_M \vph_j(x) dx\big)^\al}\Big]^{q/2}\Bigg\}
\end{eqnarray*}
and denote by $E_q'(\Theta^g)$ the expression under the supremum. Then
\begin{eqnarray*}
E'_q(\Theta^g)
&\leq& \EE\int_0^T \sum\limits_{j=1}^m\Big[ \sum\limits_{k=1}^\ell 
{\Big(\int\limits_M \vph_j(x) dx\Big)^{(1-\al)}}\int\limits_M \vph_j(x)b^2_k(x)dx\Big]^{q/2}dt.
\end{eqnarray*}
Above we used inequality
\begin{equation}\Label{Hol}\ 
\Big(\int\limits_M \vph_j b_k dx\Big)^r \leq \Big(\int\limits_M\vph_j \, dx\Big)^{r-1} \int\limits_M \vph_j b_k^r dx
\end{equation}
with $r=2$.
Due to the fact that  $\psi_k$ are such that for all $u\in TM$: $\sum\limits_{k=1}^\ell \lng\na \psi_k, u\rng^2={:}\sum\limits_{k=1}^\ell b^2_k\leq ||u||^2$ we have:
\begin{eqnarray*}
E'_q(\Theta^g)&\leq& \EE\int_0^T \sum\limits_{j=1}^m\Big[  
{\Big(\int\limits_M \vph_j dx\Big)^{(1-\al)}}\int\limits_M \vph_j||u||^2 dx\Big]^{q/2}dt\leq\\
&\leq& \EE\int_0^T \sum\limits_{j=1}^m\Big[  
{\Big(\int\limits_M \vph_j dx\Big)^{(1-\al)q/2}}
\Big(\int\limits_M \vph_j dx\Big)^{q/2-1}
\int\limits_M \vph_j||u||^q dx\Big]^{q/2}dt.
\end{eqnarray*}
Above we have also applied the inequality (\ref{Hol}) with $r=q/2$
to the last multiple in brackets: $\int\limits_M\vph_j ||u||^2 dx$. 
Since for $\al= {\frac{2(q-1)}{q}}$ we have $
\frac{(1-\al)q}{2}+\frac{q}{2}-1=0$
we obtain the required estimate (\ref{Eprg}).
\end{proof}

\begin{cor}
\Label{checklistL1}
With the assumptions of Proposition~\ref{checklistL}, if we furthermore assume that the It\^o stochastic flow $g_t$ has finite q-energy $\cE_q(g)$ then the associated  g-flow $\Theta_t^g$ also has finite q-energy $\cE'_q(\Theta^g)$.
\end{cor}

\begin{thm}\Label{Version number 4}\ Let $q>1$ an $g_t$ be an It\^o stochastic flow. Then
$$
\cE_q'(\Theta^g)= \cE_q(g).
$$
\end{thm}
\begin{proof} Because of previous theorem it is sufficient to prove
$$
\cE'_q(\Theta^g)\geq \cE_q(g).
$$
First note that for any fixed $\vep >0$ we may choose functions $\vph^\vep_1,...,\vph^\vep_{m_\vep}$ which satisfy $\sum\limits_{j=1}^m\vph_j (x) =1$ on $M$ such that ${supp} \, (\vph_j^\vep)\subset B(x_j, \vep)$ and for some fixed $\delta$ independent on $\vep$ and $j$:
\begin{equation}
\Label{vep}\ 
\int\limits_M \vph_j^\vep (x)dx \geq \delta\vep^d.
\end{equation}
Using notation~\eqref{bk} and $b=(b_1,\ldots,b_\ell)\in\RR^\ell$ together with $\|b\|=\|b\|_{\RR^\ell}$ the Euclidean norm, we may write
\begin{eqnarray}
\nonumber
\ \ \ \ \ \ 0 &\leq& \cE_q(g) -\cE_q'(\Theta^g) \leq \frac{1}{q}\EE \int\limits_0^T \Bigg\{\int\limits_M 
\bv b(y)\bv^q dy - \sum\limits_{j=1}^m
\frac{\bv\int\limits_M \vph_j(x)b(x)dx\bv^q}{\Big(\int\limits_M \vph_j(x) 
dx\Big)^{q-1}}\Bigg\}dt =\\
\nonumber
&=& \frac{1}{q}\EE \int\limits_0^T\int\limits_M
\sum\limits_{j=1}^m  \vph_j(y)
\Bigg\{ \bv b(y)\bv^q  -  
\frac{\bv\int\limits_M \vph_j(x)b(x)dx\bv^q}{\Big(\int\limits_M \vph_j(x) dx\Big)^{q}}\Bigg\}dy\,dt \leq\\
& =&\EE \int\limits_0^T\int\limits_M
\sum\limits_{j=1}^m  \vph_j(y)
\bv b(y)\bv^{q-2}\big\lng b(y), b(y)-  
\frac{\int\limits_M \vph_j(x)b(x)dx}{\int\limits_M \vph_j(x) dx}\,\big\rng\, dy\,dt.
\Label{A} 
\end{eqnarray}
Above we have used the convexity of the function $f(x) = x^q$ and applied the inequality
$$
f(x)-f(x_0)\leq \lng f'(x),x-x_0\rng.
$$
Conducting the change of variables we continue
\begin{align*}
&(\ref{A})=\EE \int\limits_0^T\!\!\sum\limits_{j=1}^m 
\frac{1}{\int\limits_M \!\!\vph_j dx}\!
\int\limits_M\!\!
\vph_j(y)
\bv b(y)\bv^{q-2}\big\lng b(y), \!\!\int\limits_M \!\!b(y)\vph_j(x)dx\!-\!\!\!
\int\limits_M \!\!\vph_j(x)b(x)dx \big\rng\, dy\,dt=\\
&=\EE \int\limits_0^T\!
\sum\limits_{j=1}^m 
\frac{1}{\int\limits_M 
\vph_j dx}\!\!
\int\limits_{M\times M}\!
\vph_j(y)
\bv b(y)\bv^{q-2}\big\lng b(y), b(y)\vph_j(x)\!-\!
\vph_j(x)b(x) \big\rng\, dx\, dy\,dt=\\
&=\EE \int\limits_0^T
\sum\limits_{j=1}^m 
\frac{1}{\int\limits_M \!
\vph_j dx}\!
\int\limits_{B(0,\vep)}
\int\limits_M\!
\vph_j(y)
\bv b(y)\bv^{q-2}\!
\big\lng b(y), \vph_j(y\!+\!r)\big(b(y)\!-\!
b(y\!+\!r)\big) \big\rng dy\,dr\,dt\leq\\
&\leq\frac{1}{\de\vep^d}\int\limits_{B(0,\vep)}
\sum\limits_{j=1}^m \EE \int\limits_0^T \int\limits_M
\vph_j(y) \bv b(y)\bv^{q-1} \bv b(y) -b(y+r)\bv dy\, dt\, dr\leq\\
&\leq \frac{1}{\de\vep^d}\int\limits_{B(0,\vep)} 
\Bigg( \EE \int\limits_0^T \int\limits_M \bv b(y) \bv^q dy\, dt \Bigg)^{\frac{q-1}{q}}
\cdot \Bigg(\EE\int\limits_0^T \int\limits_M \bv b(y) - b(y+r)\bv^q dy\, dt
\Bigg)^{1/q} dr\leq\\
&\leq \frac{\text{\it Vol}\,(B(0,\vep))}{\de\vep^d} \,q^{\frac{q-1}{q}}\cE (g)^{
\frac{q-1}{q}} \sup\limits_{r\in B(0,\vep)} \bv \tau_r b - b\bv_q,
\end{align*}
where $\bv\cdot\bv$ is a standard $L_q$-norm of vector functions on $M$ and $\tau_a b(y) = b(y-a)$. Above we also used the property (\ref{vep}) of functions $\vph_j$. Due to the continuity in $L_q(M^\ell)$ of the map $a\mapsto \tau_a b$ for $b\in L_q (M^\ell)$, the above expression tends to zero as $\vep \to 0$, that finishes the proof.

\end{proof}
\section{Existence of generalized flow minimizing the energy}\label{Section4}
\setcounter{equation}0

In this section we investigate the conditions under which there exists a generalized flow $\Theta_t$, which minimizes the  energy functional $\cE_q'(\Theta)$ defined in (\ref{Epr}).

\begin{definition}
\Label{Hq}
For $q>1$, let $\cH_q(\eta)=\cH_q(\s,\eta,T)$ the set of laws of incompressible It\^o stochastic flows $g_t$  solutions to~\eqref{SDE}, satisfying the requirements of Proposition~\ref{trampampam}, with finite q-energy $\cE_q(g)$ and endpoints configuration~$\eta$.

We let 
\begin{equation}
\label{E6}
\cE_q(\eta)=\cE_q(\s,\eta, T):=\inf\left\{\cE_q(g),\ \ \hbox{Law}(g) \in\cH_q(\eta) \right\}
\end{equation}
where by convention $\cE_q(\eta)=\infty$ if $\cH_q(\eta)$ is empty.
\end{definition}
\begin{definition}
\Label{Hprimeq}
For $q>1$, let $\cH'_q(\eta)=\cH'_q(\s,\eta,T)$ the set of laws of 
generalized flows
 $\Theta_t$ of Definition~\ref{GF}, with finite q-energy~\eqref{Epr} and 
 endpoints configuration~$\eta$. 

We let 
\begin{equation}
\label{E6gen}
\cE'_q(\eta)=\cE'_q(\s,\eta, T):=\inf\left\{\cE'_q(\Theta),\ \ \hbox{Law}(\Theta) \in \cH'_q(\eta) \right\}
\end{equation}
with by convention $\cE'(\eta)=\infty$ if there is no generalized flow with endpoints configuration~$\eta$.
\end{definition}

\begin{remark}
\Label{A1}
>From Proposition~\ref{checklistL} and Theorem~\ref{Version number 4} we have
\begin{equation}
\Label{EprimeleE}\ 
\cE'_q(\eta)\le \cE_q(\eta).
\end{equation}
\end{remark}

\begin{thm}
\label{T1}
If $\cE'_q(\eta)<\infty$ then there exists a generalized flow $\Theta$ with law in $\cH'_q(\eta)$,  such that 
$\cE'_q(\Theta)=\cE'_q(\eta)$. In other words, the infimum of the q-energy functional for given endpoints configuration~$\eta$ is achieved on an  element of the set $\cH'_q(\eta)$. 
\end{thm}
\begin{proof}


Assume $\cE'_q(\eta)<\infty$. Let $\{\Theta^n\}_{n\ge 1}$ be a sequence of generalized flows with laws in $\cH'_q(\eta)$ with energies converging to the infimum value {for given endpoints configuration $\eta$}:
\begin{equation}
 \Label{22}
\lim_{n\to \infty}\cE'_q(\Theta^n)=\cE'_q(\eta).
\end{equation}
Consider  {two } sequences ${\Phi}=\{{\tvp}^j\}_{j\ge 1}$, 
${\Psi}=\{{\tvs}^k\}_{k\ge 1}$
of 
elements of $C^\infty(M)$, where ${\tvp}$ is dense in the 
topology of uniform convergence, and ${\tvs}$ is  dense in 
the topology of uniform convergence of functions and their first and second 
order derivatives. 

We prove that there exists a subsequence $\{\Theta^{n_\ell}\}_{\ell\ge 1}$ 
of {generalized flows}  { such that the family {of semimartingales} 
$\left\{\Theta_t^{n_\ell}({\tvp}^j,{\tvs}^k)\right\}_{j,k\ge 1}$ converges as $\ell$ goes to infinity 
to some family of semimartingales  $\left\{\Theta_t({\tvp}^j,
{\tvs}^k)\right\}_{j,k\ge 1}$, and all $\Theta_t({\tvp}^j,
{\tvs}^k), \ {j,k\ge 1}$ are defined in the same probability space.}

For this it is sufficient to prove that 
for fixed $j,k\ge 1$ and  {$\Phi^j\in\Phi$},  {$\Psi^k\in\Psi$} the family {of semimartingales} $\left\{\Theta_t^{n}
({\tvp}^j,{\tvs}^k)\right\}_{n\ge 1}$ is tight. A diagonal extraction of 
subsequence then gives the result.

Proving that the family $\left\{\Theta_t^{n}({\tvp}^j,{\tvs}^k)\right\}_{n\ge 1}$ is tight can be done as in~\cite{ACGF} by checking the conditions of Theorem~3 in \cite{Zheng:85}. 
For any fixed $n\in\NN$, define ${\{}Y_i^n{\}}_{i\in \NN}$ as the family of all 
$\tilde\Theta^n({\tvp}^j,{\tvs}^k)$ and $\di 
\f12\int_0^\cdot\Theta_s^n({\tvp}^j,\D {\tvs}^k)\,ds$, for $j,k\ge 1$ and 
all their covariations {enumerated in proper order}. They can be renamed this way because there is a 
countable number of them. 
 {Like in~\cite{ACGF} and due to  Lemma \ref{Lemma4.6} we have:}
\begin{equation}
 \begin{split} \Label{E24}\
\EE\int_0^T\left\vert D\tilde\Theta_t^n({\tvp}^j,{\tvs}^k)\right\vert^q\,dt&\le q2^q \left\|{\tvp}^j\right\|_{L^\infty(M)}^q\left\|\na{\tvs}^k\right\|_{L^\infty(M)}^q\cE'_q(\Theta^n)\\&
\le q2^q \left\|{\tvp}^j\right\|_{L^\infty(M)}^q\left\|\na{\tvs}^k\right\|_{L^\infty(M)}^q(\cE'_q(\eta)+1)
\end{split}
\end{equation}
for $n$ sufficiently large.
Due to (5) in Definition~\ref{GF} we also have 
\begin{equation}
 \Label{E39}\ 
\EE\left[\int_0^T\left|\Theta_s^n({\tvp}^j,\D {\tvs}^k)\right|^2\,ds\right]\le T\,\|{\tvp}^j\|_{L^2(M)}^2\left\|\D{\tvs}^k\right\|_{L^2(M)}^2.
\end{equation}
On the other hand due to~\eqref{E25bis} the  {covariation} of $\tilde\Theta^n_t({\tvp}^j,{\tvs}^k)$ satisfies 
\begin{equation}
 \Label{E25ter}\ 
d\left[\tilde\Theta^n({\tvp}^j,{\tvs}^k),\tilde\Theta^n({\tvp}^j,{\tvs}^k)\right]_t\le \|{\tvp}^j\|_{L^2(M)}^2\left\|\na{\tvs}^k\right\|_{L^2(M)}^2\,dt.
\end{equation}
Therefore the derivative of this covariation is uniformly bounded.

From~\eqref{E24}, \eqref{E39} and \eqref{E25ter}, by  Theorem~3 in 
\cite{Zheng:85}, for any fixed  $d\in \NN$, there exists a subsequence 
$n\mapsto \g_d(n)$ such that the {sequence of} vector{s} $\left\{(Y_i^{\g_d(n)})_{1\le i\le 
d}\right\}_{n\in\NN}$ converges in law to some vector $(Y_i^d)_{1\le i \le 
d}$. By a diagonal extraction together with Kolmogorov extension theorem, 
we can find a subsequence $n\mapsto \g(n)$ and a family of random 
variables $(Y_i)_{{i\in\NN}}$
such that for all $d\in \NN$, the sequence {of finite vectors}
$\left\{(Y_i^{\g(n)})_{1\le i\le d}\right\}_{n\in\NN}$ converges to {the vector}
$(Y_i)_{1\le i \le d}$. For simplicity we rename $Y_i^{\g(n)}$ by $Y_i^n$. 
Denote by 
\begin{equation}
\Label{LimThetan}\ 
\tilde\Theta_{{t}}({\tvp}^j,{\tvs}^k)=\SL\lim_{n\to\infty}\tilde\Theta_{{t}}^n({\tvp}^j,{\tvs}^k)
\end{equation}
 and
 \begin{equation}
 \Label{LimintThetan}\ 
 A_{{t}}({\tvp}^j,{\tvs}^k)=\SL\lim_{n\to\infty}\f12\int_0^{{t}}\Theta_s^n({\tvp}^j,\D {\tvs}^k)\,ds, 
 \end{equation}
 where $\SL\lim$ denotes limit in law.
  Then the required limiting process is
\begin{equation}\Label{E41} \ 
\Theta_t({\tvp}^j,{\tvs}^k):=\tilde\Theta_t({\tvp}^j,{\tvs}^k)+ A_t({\tvp}^j,{\tvs}^k).
\end{equation}

{Now} we are going to prove that the process $\Theta_t$ may be extended to  
generalized flow in the sense of Definition~\ref{GF}, which satisfies the 
endpoints condition {$\eta$} and minimizes the q-energy functional~$\cE'_q(\Theta)$.

{From the procedure above for the construction of the sequence $Y_i^n:=Y^{\gamma(n)}_i$}  we have that {for any fixed $d$} all linear combinations of $Y_i^n$, $1\le i \le d$ converge to the same linear combination of $Y_i$, $1\le i \le d$, since linear combinations are continuous functions of finite families. 
 
As for the starting point we have 
\begin{equation}
 \Label{E29}\ 
\tilde\Theta_0({\tvp}^j,{\tvs}^k)=({\tvp}^j,{\tvs}^k)_{L^2(M)}.
\end{equation}

By Theorem~10 in \cite{Meyer-Zheng:84} (which is stated for $q=2$ but  extends to any $q>1$ using H\"older inequality), we have 
\begin{equation}
 \Label{E26}\ 
\EE\left[\int_0^Tdt\left(D\tilde\Theta_t({\tvp}^j,{\tvs}^k)\right)^{q}\right]\le\limsup_{n\to\infty}\EE\left[\int_0^Tdt\left(D\tilde\Theta_t^n({\tvp}^j,{\tvs}^k)\right)^q\right].
\end{equation}
This by Lemma~\ref{Lemma4.6} implies
\begin{equation}
 \Label{E27}\ 
\EE\left[\int_0^Tdt\left(D\tilde\Theta_t({\tvp}^j,{\tvs}^k)\right)^q\right]\le q2^q\left\|{\tvp}^j\right\|_{L^\infty(M)}^q\left\|\na{\tvs}^k\right\|_{L^\infty(M)}^q(\cE'_q(\eta)+1).
\end{equation}
{We have that all covariations of $Y_i^n$ and $Y_j^n$ converge to 
covariation of $Y_i$, $Y_j$, this is due to Theorem~3 
in~\cite{Zheng:85}.}
So inequality~\eqref{E25ter} also extends to the limiting process:
\begin{equation}
 \Label{E28}\ 
d\left[\tilde\Theta({\tvp}^j,{\tvs}^k),\tilde\Theta({\tvp}^j,{\tvs}^k)\right]_t\le \|{\tvp}^j\|_{L^2(M)}^2\left\|\na{\tvs}^k\right\|_{L^2(M)}^2\,dt.
\end{equation}

Furthermore, by bilinearity of all the $\tilde\Theta^n_t$, statements~\eqref{E29},~\eqref{E27} and~\eqref{E28} are still true with functions $\varphi$ and $\psi$ which are linear combinations of ${\tvp}^j$ and ${\tvs}^k$. Moreover {the limit} $\tilde\Theta_t$ is {also} bilinear for theses combinations.

{Thus by (\ref{E41}) we} have defined $\Theta_t(\Phi^j,\Psi^k)$  for $\{\Phi^j\}_{j\ge 1}$ and 
$\{\Psi^k\}_{k\ge 1}$ dense in $C^\infty(M)$ in corresponding topology. Our 
next task is to define it for any $\vph, \psi\in C^\infty(M)$. Let us emphasize 
the fact that {for} all {$j, k$} the $\Theta_t(\Phi^j,\Psi^k)$ are defined on the same 
filtered probability space which will also serve to define the $\Theta_t(\vph,
\psi)$. 

 For $\varphi,\psi\in C^\infty (M)$ there exist {sub}sequences 
 $\{{\tvp}^{j_\ell}\}_{\ell\ge 1}$ and $\{{\tvs}^{k_\ell}\}_{\ell\ge 1}$ which 
 converge uniformly to $\varphi$ and $\psi$ (for the second sequence 
 uniform convergence holds for functions and their first order 
 derivatives). From \eqref{E29},~\eqref{E27}, and~\eqref{E28}  and the 
 bilinearity of $\tilde\Theta_t$ we deduce that  
 $\tilde\Theta_t({\tvp}^{j_\ell},{\tvs}^{k_\ell})$ converges to a 
 semimartingale $\tilde\Theta(\varphi,\psi)$ which does not depend on 
 the {choice of sub}sequences  $\{{\tvp}^{j_\ell}\}_{\ell\ge 1}$ and 
 $\{{\tvs}^{k_\ell}\}_{\ell\ge 1}$. Here the convergence is taken in the 
 topology of $L^q$ convergence of the drift and the {convergence of the} quadratic variation 
 (the so-called $H_q$ topology). It is also easy  to check that $(\varphi,
 \psi)\mapsto \tilde\Theta_t(\varphi,\psi)$ is bilinear {map} and that  for all 
 $\varphi,\psi\in C^\infty (M)$, $\tilde\Theta_t(\varphi,\psi)$ is the limit in 
 law of $\{\tilde\Theta_t^n(\varphi,\psi)\}_{n\ge 1}$ (the last point is due to 
 the fact that the bounds in the right of~\eqref{E24} 
 and~\eqref{E25ter} can be taken indepent of $n$, and this allows to 
 identify the limit of a subsequence of $\{\tilde\Theta_t^n(\varphi,
 \psi)\}_{n\ge 1}$ {with}
  $\tilde\Theta_t(\varphi,\psi)${)}. Similarly, for 
 $\varphi_1, \psi_1,\varphi_2,\psi_2\in C^\infty(M)$, the process 
 $[\tilde\Theta(\varphi_1,\psi_1),\tilde\Theta(\varphi_2,\psi_2)]_{{t}}$ is the 
 limit in law of $\{[\tilde\Theta^n(\varphi_1,\psi_1),
 \tilde\Theta^n(\varphi_2,\psi_2)]_{{t}}\}_{n\ge 1}$.

From~\eqref{LimintThetan}, by bilinearity {of $\Theta_t^n$} and~\eqref{E39} we have, by the same subsequence procedure, {that} for all $\vph,\psi\in C^\infty(M)$
\begin{equation}
\Label{L2}\ 
A_{{t}}(\vph,\psi)=\SL\lim_{n\to\infty}\f12\int_0^{{t}}\Theta_s^n(\vph,\D\psi).
\end{equation}
This together with~\eqref{LimThetan} yields
\begin{equation}
\Label{L3}\ 
\Theta_{{t}}(\vph,\psi)=\SL\lim_{n\to\infty}\Theta_{{t}}^n(\vph,\psi)
\end{equation}
and in particular
\begin{equation}
\Label{L4}\ 
\Theta_{{t}}(\vph,\D\psi)=\SL\lim_{n\to\infty}\Theta_{{t}}^n(\vph,\D\psi).
\end{equation}
So by integration we get 
\begin{equation}
\Label{L5}\ 
\int_0^{{t}}\Theta_s(\vph,\D\psi)=\SL\lim_{n\to\infty}\int_0^{{t}}\Theta_s^n(\vph,\D\psi).
\end{equation}
Finaly by identifying the limits in~\eqref{L2} and~\eqref{L5} we have
 \begin{equation}
  \Label{E42}\ 
A_t(\vph,\psi)=\f12\int_0^t\Theta_s(\varphi,\D\psi)\,ds.
 \end{equation}
So
 for $\varphi,\psi\in C^\infty(M)$
\begin{equation}\label{E38}
\Theta_t(\varphi,\psi)=\tilde\Theta_t(\varphi,\psi)+\f12\int_0^t\Theta(\varphi,\D\psi)\,ds.
\end{equation}
It remains to prove that $\text{{Law}}
\,\Theta_{{t}}$ is an element of $\cH'(\eta)$ and that 
\begin{equation}\Label{E32}\ 
\cE'_q(\Theta)\le \cE'_q(\eta).
\end{equation} 
By passing to the limit {in} (1)-(6), of Definition~\ref{GF} {for $\Theta_t^n$ we get them for $\Theta_t$} . 

We are left to prove~\eqref{E32}. For this we need to improve~\eqref{E27}.

For $\ell\ge 1$, $\vph, \psi_1,\ldots,\psi_\ell\in C^\infty(M)$,  $\psi=(\psi_1,\ldots,\psi_\ell)$, let 
$$
|D\tilde\Theta_t^n(\vph,\psi)|:=\left(\sum_{k=1}^\ell D\tilde\Theta_t^n(\vph,\psi_k)^2\right)^{1/2}.
$$
 Theorem~10 in \cite{Meyer-Zheng:84} (extended to any {$q>1$}
 says that for any $\varphi,\psi\in C^\infty(M)$ and any $K>0$, if for all $n\ge 1$
$$
\EE\left[\int_0^T\left\vert D\tilde\Theta_t^n(\varphi,\psi)\right\vert^q\,dt\right]\le K
$$
then 
$$
\EE\left[\int_0^T\left\vert D\tilde\Theta_t(\varphi,\psi)\right\vert^q\,dt\right]\le K.
$$
Let 
$$
K=\liminf_{n\to\infty}\EE\left[\int_0^T\left\vert D\tilde\Theta_t^n(\varphi,\psi)\right\vert^q\,dt\right].
$$
Consider a subsequence $\Theta^{n_\ell}$ such that 
$$
\lim_{\ell\to\infty}\EE\left[\int_0^T\left\vert D\tilde\Theta_t^{n_\ell}(\varphi,\psi)\right\vert^q\,dt\right]=K.
$$
Then fixing  $\e>0$ and applying the above result to the sequence $\tilde\Theta^{n_\ell}$ for sufficiently large $\ell$ we obtain
$$
\EE\left[\int_0^T\left\vert D\tilde\Theta_t(\varphi,\psi)\right\vert^q\,dt
\right]\le K+\e.
$$
Letting $\e\to 0$ we obtain
\begin{equation}
 \label{E30}
\EE\left[\int_0^T\left\vert D\tilde\Theta_t(\varphi,\psi)\right\vert^q\,dt\right]\le \liminf_{n\to\infty}\EE\left[\int_0^T\left\vert D\tilde\Theta_t^n(\varphi,\psi)\right\vert^q\,dt\right].
\end{equation}
Letting $\varphi^j, \psi^k$ as in~\eqref{Epr} {with $\psi = (\psi_1,...,\psi_\ell)$}, we have
\begin{align*}
 \sum_{j=1}^m\EE\left[\int_0^T\f{\left\vert D\tilde\Theta_t(\varphi^j,
 \psi)\right\vert^q}{\Theta_t(\varphi^j,1)^\a}\,dt\right]
 &\le 
 \sum_{j=1}^m\liminf_{n\to\infty}\EE\left[\int_0^T\f{\left\vert 
 D\tilde\Theta_t^n(\varphi^j,\psi)\right\vert^q}
 {\Theta_t^n(\varphi^j,1)^\a}\,dt\right]\\
&\le \liminf_{n\to\infty}\sum_{j=1}^m\EE\left[\int_0^T\f{\left\vert 
D\tilde\Theta_t^n(\varphi^j,\psi)\right\vert^q}
{\Theta_t^n(\varphi^j,1)^\a}\,dt\right]\\
&\le \cE'_q(\eta).
\end{align*}
Finally taking the supremum in the left as in~\eqref{Epr} yields
\begin{equation}
 \label{E31}
\cE'_q(\Theta)\le \cE'_q(\eta)
\end{equation}
and this achieves the proof.
\end{proof}

\section{Constructing generalized flows with prescribed drift}\label{Section5}
\setcounter{equation}0
We have shown in section~\ref{Section2} that semimartingale flows can be regarded as generalized ones.    In particular (considering $q=2$)  a smooth
solution of Navier-Stokes equation will thus give rise to a generalized solution of the corresponding variational problem.\rm We shall now show that these are not the only possible generalized  flows: indeed, we can define
weaker solutions (which, in particular, will not necessarily correspond to semimartingale flows) of the Navier-Stokes variational problem built upon weak solutions of some transport equations.

Consider a deterministic drift $ b(t,x)$ such that $b\in L^1([0,T], L^q (TM))$ for some $q>1$, and $\Div b\equiv 0$ in the weak sense.  
The following result states existence of generalized flows with drift~$b$ (see \cite{Figalli:08}, \cite{Flandoli-Gubinelli-Priola:10} and \cite{Lee:11} for related results).
\begin{thm}
\label{T4.1}
There exists a generalized flow $\Theta_t$ with drift $ b(t,x)$ i.e. such that for all $\varphi,\psi\in C^\infty(M)$,  $(t,\om)\in [0,T]\times \Om$ almost everywhere  
\begin{equation}
\label{4.1}
D\tilde \Theta_t(\varphi,\psi)=\Theta_t(\varphi,\Div(\psi b)),
\end{equation}
and with kinetic energy smaller than or equal to $\di\f1q\int_0^T\left(\int_M\|b(t,x)\|^q\,dx\right)\,dt$.
\end{thm}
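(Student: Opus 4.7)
The plan is to approximate the rough drift $b$ by smooth divergence-free fields for which classical semimartingale flows exist, and then apply the same compactness/lower-semicontinuity machinery developed in the proof of Theorem~\ref{T1}.

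\textbf{Step 1 (Smooth approximation).} Choose smooth divergence-free vector fields $b^n$ on $[0,T]\times M$ with $b^n\to b$ in $L^1([0,T],L^2(TM))$ and $\int_0^T\|b^n(t)\|_{L^2}^2\,dt\to\int_0^T\|b(t)\|_{L^2}^2\,dt$ (obtained by mollification followed by a Leray projection).

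\textbf{Step 2 (Classical flows).} For each $n$, the SDE with coefficients $(\s,b^n)$ has a smooth solution flow $g^n$ which lies in $\SH_2(\s,\eta^n,T)$ for its associated final configuration $\eta^n$. By Proposition~\ref{P1}, the corresponding generalized flow $\Theta^n:=\Theta^{g^n}$ satisfies
$$\SE'(\Theta^n)=\SE(g^n)=\f12\int_0^T\|b^n(t)\|_{L^2}^2\,dt,$$
the second equality using incompressibility to eliminate the flow inside the energy integral. In particular the energies are uniformly bounded.

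\textbf{Step 3 (Compactness).} The uniform energy bound, together with the bracket bound~\eqref{E25bis}, furnishes exactly the Meyer--Zheng tightness inputs used in the proof of Theorem~\ref{T1} for each fixed pair $(\tilde\varphi^j,\tilde\phi^k)$. Extract a subsequence along which all the semimartingales $\tilde\Theta^n(\tilde\varphi^j,\tilde\phi^k)$ together with the drift integrals and the pairwise covariations converge jointly in law; pass to the limit by bilinearity and density exactly as in Theorem~\ref{T1} to obtain a bilinear map $\Theta$ satisfying properties (1)--(8) of Definition~\ref{D1}.

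\textbf{Step 4 (Identifying the drift).} For each $n$, since $\Div b^n=0$ we have $\Div(\phi b^n)=\langle d\phi,b^n\rangle$, so
$$\tilde\Theta^n_t(\varphi,\phi)-\int_0^t\Theta^n_s(\varphi,\langle d\phi,b^n(s,\cdot)\rangle)\,ds$$
is a martingale. Writing $\Theta^n_s(\varphi,\langle d\phi,b^n\rangle)=\int_M \theta^{\varphi,n}(s,x)\langle d\phi(x),b^n(s,x)\rangle\,dx$ and using the uniform bound $\|\theta^{\varphi,n}\|_\infty\le\|\varphi\|_\infty$, the strong $L^1([0,T],L^2)$ convergence $b^n\to b$ shows that $\Theta^n_s(\varphi,\langle d\phi,b^n\rangle)-\Theta^n_s(\varphi,\langle d\phi,b\rangle)\to 0$ in $L^1(dt\otimes d\PP)$. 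Combined with the joint convergence of $\Theta^n$, this lets us identify the limit of the above martingale identity as
$$\tilde\Theta_t(\varphi,\phi)-\int_0^t\Theta_s(\varphi,\Div(\phi b))\,ds\quad\text{is a martingale,}$$
which is exactly equation~\eqref{4.1}.

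\textbf{Step 5 (Energy bound).} Lower semicontinuity of $\SE'$ under Meyer--Zheng convergence, established in the proof of Theorem~\ref{T1} via inequalities~\eqref{E30}--\eqref{E31}, yields
$$\SE'(\Theta)\le\liminf_n\SE'(\Theta^n)=\f12\int_0^T\|b(t)\|_{L^2}^2\,dt.$$

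\textbf{Main obstacle.} The delicate step is Step~4: the Meyer--Zheng convergence is only in law for the pseudo-path topology, so care is needed to pass simultaneously to the limit in the martingale part $\tilde\Theta^n(\varphi,\phi)$ \emph{and} in the absolutely continuous drift integral, while also replacing $b^n$ by $b$ inside the latter. A Skorokhod-type coupling together with the uniform integrability of the drift densities (guaranteed by~\eqref{boundDtildeTheta} and the uniform energy bound) should carry this out rigorously.
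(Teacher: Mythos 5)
Your overall route coincides with the paper's: regularize $b$ into smooth divergence-free fields (the paper uses the Hodge heat semigroup $e^{\e\square}$ followed by time convolution rather than mollification plus a Leray projection, which is immaterial), solve the SDE to obtain classical flows, invoke Proposition~\ref{P1} to get $\SE'(\Theta^n)=\SE(g^{\e_n})=\f12\int_0^T\|b^{\e_n}(t)\|_{L^2}^2\,dt$, run the Meyer--Zheng compactness argument of Theorem~\ref{T1}, and obtain the energy bound in the limit. Steps 1--3 and 5 match the paper.

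The gap is in Step~4, and it is precisely where the paper's proof does its real work. The identity~\eqref{4.1} involves $\Theta_s(\varphi,\Div(\phi b))=\Theta_s(\varphi,\langle d\phi,b(s,\cdot)\rangle)$, where $\langle d\phi,b(s,\cdot)\rangle$ is only a time-dependent $L^2$ function on $M$; but the limit object $\Theta$ produced by your Step~3 is a bilinear map defined on pairs of \emph{smooth} functions, so the right-hand side of~\eqref{4.1} is not yet defined. Your bound $\|\theta^{\varphi,n}\|_\infty\le\|\varphi\|_\infty$ does let you replace $b^{\e_n}$ by $b$ inside $\Theta^n$, at the level of the approximating flows, but it neither defines $\Theta_s(\varphi,\langle d\phi,b\rangle)$ for the limit nor shows that the limit of $\int_0^\cdot\Theta^n_s(\varphi,\langle d\phi,b\rangle)\,ds$ is consistent with (linear in $b$, independent of the approximating family for) any such extension. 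The paper resolves this by fixing a countable family $\tilde\psi^i(t)\tilde\phi^k(x)$ whose rational linear combinations are dense in $L^2([0,T]\times M)$, proving the uniform a priori bound~\eqref{phibetabis}, passing it to the limit to obtain~\eqref{phibeta}, and then extending $\beta\mapsto\int_0^\cdot\Theta_s(\phi,\beta(s,\cdot))\,ds$ by $L^2$ continuity; only after this is $\int_0^\cdot\Theta_s(\varphi,\Div(\phi b))\,ds$ meaningful and identifiable as the limit of $\int_0^\cdot\Theta^n_s(\varphi,\Div(\phi b^{\e_n}))\,ds$. Your proposed remedy (Skorokhod coupling plus uniform integrability of the drift densities) addresses the mode of convergence but not this definitional issue, so the $L^2$ extension step must be added for Step~4 to close.
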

\begin{proof}
For $\e>0$, as in \cite{Lee:11} Section~4.4 we regularize $ b$ by using the de Rham-Hodge semi-group on differential forms $e^{\e\square}$ with $\square = -(d\d+\d d)$, $\d$ the codifferential form of~$d$. For a differential $1$-form $\a$ on $M$ we denote by $\a^\sharp$ the vector field on $M$ associated to $\a$ by the metric, and for a vector field $A$ on $M$ we denote by $A^\flat$ the differential $1$-form associated to $A$ by the metric: we have 
$$
\langle \a, A\rangle=\langle \a^\sharp, A\rangle=\langle\a,A^\flat\rangle
$$ 
where the first bracket is for duality, the second one is the scalar product in $TM$, the third one the scalar product in $T^\ast M$.
By letting 
\begin{equation}\label{E64}
 (b')^\e=\left(e^{\e\square}( b^\flat)\right)^\sharp
\end{equation}
we get a smooth time-dependent vector field satisfying $\Div  (b')^\e=0$ (see \cite{Lee:11} Proposition~4.4.1). Then we regularize $(b')^\e$ in time by convolution with a smooth kernel with support $[-\e/2,\e/2]$ (for this we need to extend $b$ by letting $b(t,x)=0$ for $t<0$ and for $t>T$). Let us call $b^\e(t,x)$ the regularized vector field.  It is also divergence free since the divergence operator commutes with the time integration,
and it approximates    $ b$ in $( L^q([0,T]\times M, TM))$.
For each $\e>0$ we can construct a semimartingale flow as a strong solution to \eqref{SDE} where $u(t,g(t)(x),\om)$ has been replaced by $b^\e(t,g(t)(x))$. Let us denote by $g^\e$ the solution. Letting $(\e_n)_{n\ge 0}$ a sequence of positive numbers decreasing to $0$ we let $\Theta^n=\Theta^{g^{\e_n}}$. Now using the fact that
$$
\SE(g^\e)=
\f1q\int_0^T\left(\int_M\|b^\e(t,x)\,dt\|^q\,dx\right)\,dt\le \f1q\int_0^T\left(\int_M\|b(t,x)\,dt\|^q\,dx\right)\,dt+1
$$
for $\e$ sufficiently small
 we proceed similarly to the proof of Theorem~\ref{T1} { to establish that possibly by 
 extracting a subsequence,}
 there exists a generalized flow $\Theta_t$ such that for all 
 $\varphi,\psi\in C^\infty(M)$ $\Theta^n(\varphi,\psi)$ converges in law to  $\Theta(\varphi,
 \psi)$ and  $\int_0^\cdot D\tilde \Theta_s^n(\varphi,\psi)\,ds$ converges in law to the drift 
 of $\tilde \Theta(\varphi,\psi)$. We have  $D\tilde \Theta^n(\varphi,
 \psi)=\Theta^n(\varphi,\Div(\psi\, b^{\e_n}))$ which is defined as $\int_M\varphi(x)
 \Div(\psi \, b^{\e_n}g(\cdot)(x))\,dx$. Since $b$ is time-dependent and not smooth we have
  to extend this definition. We let $(\tilde\a^i)_{i\ge 1}$ be a family of smooth functions 
  $[0,T]\to \RR$ such that, { possibly by extending the family $ \Psi^k$ defined in the 
  proof of  Theorem~\ref{T1},}
   linear combinations of functions $(t,x)\mapsto 
  \tilde\a^i(t)\Psi^k(x)$ with rational coefficients are dense in $L^q([0,T]\times M)$. 
  Now for all $\b(t,x)=\sum_{\ell=1}^La_\ell\tilde\a^{i_\ell}(t)\Psi^{k_\ell}(x)$ with 
  rational $a_\ell$ we can assume that the processes $$t\mapsto 
  \int_0^t\Theta_s^n( \Phi^j,\beta(s,
  \cdot))\,ds=\sum_{\ell=1}^La_\ell\int_0^t\Theta_s^n( \Phi^j,
  \Psi^{k_\ell})\tilde\a^{i_\ell}(s)\,ds 
 $$
all together converge in law as $n\to \infty$ to  processes $\di t\mapsto \int_0^t\Theta_s( \Phi^j,\beta(s,\cdot))\,ds$ which satisfy
\begin{equation}
\label{phibeta}
\EE\left[\int_0^T\Theta_s( \Phi^j,\beta(s,\cdot))^q\,ds\right]\le \| \Phi^j\|_q^p\|\beta\|_{L^q([0,T]\times M)}^q,
\end{equation}
from the fact that for all $n\ge 1$
\begin{equation}
\label{phibetabis}
\EE\left[\int_0^T\Theta_s^n( \Phi^j,\beta(s,\cdot))^q\,ds\right]\le \| \Phi^j\|_p^q\|\beta\|_{L^q([0,T]\times M)}^q.
\end{equation}
Here $p$ is the exponent conjugate to $q$: $\di p=\f{q}{q-1}$.

 This bound allows to define $\di t\mapsto \int_0^t\Theta_s(\varphi,\beta(s,\cdot))\,ds$ for all smooth $\varphi$ and $\beta\in L^q([0,T]\times M)$.  So $t\mapsto\int_0^t\Theta_s(\varphi,\Div(\psi \, b))\,ds$ is well defined (recall that $\Div(\psi b)=\langle d\psi,b\rangle$), and
  by an argument similar to the proof of Theorem~\ref{T1}  we see that $t\mapsto\int_0^t\Theta_s^n(\varphi,\Div(\psi \, b^{\e_n}))\,ds$ converges in law to $t\mapsto\int_0^t\Theta_s(\varphi,\Div(\psi \, b))\,ds$.
So we can make the identification $\int_0^tD\tilde \Theta_s(\varphi,\psi)\,ds=\int_0^t\Theta_s(\varphi,\Div(\psi \, b))\,ds$.

We are left to prove the bound for the kinetic energy. But this is exactly similar to the proof of Theorem~\ref{2var}.
\end{proof}

\section{Constructing generalized flows from solutions of finite variation transport equations}\label{Section6}
\setcounter{equation}0

In this section we aim to give an alternative construction of generalized flow with prescribed drift, using Ocone Pardoux method and weak solution of transport equations (in the sense of DiPerna and Lions).

To start with, let us consider a semimartingale flow $g(t)(x)$ satisfying $\  g(0)(x)\equiv x$ and
\begin{equation}
 \label{E46}
dg(t)(x)=\s(g(t)(x))\circ dW_t+b(t,g(t)(x),\om) dt,
\end{equation}
with the same assumptions as in the beginning of section~\ref{Section2}. In particular the vector fields $\s_i$ are divergence free. 
Assume that $\s$ and $b$ are $C^1$ in the space variable. Let $\tilde g(t)(x)$ be the martingale flow satisfying 
\begin{equation}
 \label{E47}
d\tilde g(t)(x)=\s(\tilde g(t)(x))\circ dW_t,\qquad \qquad \tilde g(0)(x)\equiv x. 
\end{equation}
Notice that $\tilde g(t)$ is measure preserving.
The method of Ocone and Pardoux (\cite{Ocone-Pardoux:89}) consists in writing 
\begin{equation}
 \label{E48}
g(t)(x)=\tilde g(t)\left(\psi(t)(x)\right)
\end{equation}
with $\psi(t)(x)$ a bounded variation flow to be determined. From~\eqref{E48} we get 
\begin{equation}
 \label{E49}
dg(t)(x)=(d\tilde g(t))\left(\psi(t)(x)\right)+T_{\psi(t)}\tilde g(t)\left(d\psi(t)(x)\right)
\end{equation}
where $T_{\psi(t)}\tilde g(t)$ is the tangent map at $\psi(t)$ to $x\mapsto \tilde g(t)(x)$ (we omit the dependence in~$\om$). This
 together with~\eqref{E46} and~\eqref{E47} yields 
\begin{equation}
 \label{E50}
d\psi(t)(x)=\tilde b(t,\psi(t)(x),\om)\,dt
\end{equation}
with 
\begin{equation}
 \label{E51}
\tilde b(t,y,\om)=\left(T_y\tilde g(t)(\cdot)\right)^{-1}\left(b(t,\tilde g(t)(y),\om)\right).
\end{equation}
For $\varphi\in C^\infty(M)$ define $\theta_t^{g,\varphi}$, $\theta_t^{\tilde g,\varphi}$, $\theta_t^{\psi,\varphi}$ as: for $\phi\in C^\infty(M)$
\begin{equation}
 \label{E53}
\Theta^g(\varphi, \phi)=\left(\theta_t^{g,\varphi}, \phi\right)_{L^2(M)},\ \Theta^{\tilde g}(\varphi, \phi)=\left(\theta_t^{\tilde g,\varphi}, \phi\right)_{L^2(M)},\Theta^\psi(\varphi, \phi)=\left(\theta_t^{\psi,\varphi}, \phi\right)_{L^2(M)}.
\end{equation}
From~\eqref{E11} and~\eqref{E48} we get 
\begin{equation}
 \label{E52}
\begin{split}
 \theta_t^{g,\varphi}&=\varphi\circ g(t)^{-1}\\
&=\varphi\circ \psi(t)^{-1}\circ \tilde g(t)^{-1}.
\end{split}
\end{equation}
and this yields
\begin{equation}
 \label{E54}
\Theta_t^g(\varphi,\phi)=\Theta_t^{\tilde g(t)}\left(\theta_t^{\psi,\varphi}, \phi\right)
\end{equation}
which implies 
\begin{equation}
 \label{E55}
\Theta_t^g(\varphi,\phi)=\Theta_t^\psi(\varphi,\phi\circ \tilde g(t))
\end{equation}
where we used  the fact that $\tilde g(t)$ is measure preserving.

\begin{lemma}

 \label{L1}
We have for all $\phi\in C^\infty(M)$, $t\in [0,T]$, a.s.
\begin{equation}
 \label{E56}
\int_M\left(\Div\tilde b(t,\cdot,\om)\right)(x)\phi(x)\,dx=\int_M\Div b(t,\cdot,\om)(y)\left(\phi\circ (\tilde g(t))^{-1}\right)(y)\,dy.
\end{equation}
In particular, if $\Div b(t,\cdot,\om)\equiv 0$ then $\Div \tilde b(t,\cdot,\om)\equiv 0$.
\end{lemma}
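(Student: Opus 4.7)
The approach is to recognize that $\tilde b(t,\cdot,\om)$ is the pullback of $b(t,\cdot,\om)$ under the measure-preserving diffeomorphism $\tilde g(t)$, and then combine integration by parts on the closed manifold $M$ with a change of variables. Fix $t$ and $\om$ in the full-measure set on which $\tilde g(t)(\cdot,\om)$ is a $C^2$-diffeomorphism of $M$ (this is ensured by the $C^2$ hypothesis on $\s$). Since $M$ has no boundary, I would first integrate by parts to obtain
$$\int_M (\Div \tilde b(t,\cdot,\om))(x)\phi(x)\,dx = -\int_M \langle \tilde b(t,x,\om), \grad \phi(x)\rangle\,dx.$$
Using the definition~\eqref{E51}, I would then rewrite the integrand via the canonical duality pairing as
$$\langle \tilde b(t,y,\om), \grad\phi(y)\rangle = d\phi(y)\bigl[(T_y\tilde g(t))^{-1}(b(t,\tilde g(t)(y),\om))\bigr],$$
and apply the chain-rule identity $d\phi(y) = d(\phi\circ \tilde g(t)^{-1})(\tilde g(t)(y)) \circ T_y\tilde g(t)$ to cancel $T_y\tilde g(t)$ against its inverse, leaving
$$\langle \tilde b(t,y,\om), \grad\phi(y)\rangle = d(\phi\circ \tilde g(t)^{-1})(\tilde g(t)(y))\bigl[b(t,\tilde g(t)(y),\om)\bigr].$$

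Next I would carry out the change of variables $z = \tilde g(t)(y)$. Because $\tilde g(t)$ preserves Lebesgue measure the Jacobian factor equals one, so
$$-\int_M \langle \tilde b(t,y,\om),\grad\phi(y)\rangle\,dy = -\int_M \langle b(t,z,\om), \grad(\phi\circ \tilde g(t)^{-1})(z)\rangle\,dz,$$
and a further integration by parts (again using $\p M=\emptyset$) converts the right-hand side into $\int_M (\Div b(t,\cdot,\om))(z)(\phi\circ \tilde g(t)^{-1})(z)\,dz$, which is exactly the right-hand side of~\eqref{E56}. The ``in particular'' clause follows at once: if $\Div b(t,\cdot,\om)\equiv 0$ then~\eqref{E56} vanishes for every $\phi\in C^\infty(M)$, and density of smooth functions in $L^2(M)$ forces $\Div \tilde b(t,\cdot,\om)\equiv 0$.

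The argument is essentially routine because the $C^1$ assumption on $b$ and the $C^2$ assumption on $\s$ make all divergences classical. The one point to watch is that $(T_y\tilde g(t))^{-1}$ is the inverse of the differential, not a metric adjoint, so the cancellation has to proceed through the natural dual pairing $d\phi[\cdot]$ rather than directly through the Riemannian inner product. From a more intrinsic viewpoint, the entire identity is a manifestation of $\Div(f^{\ast}X) = (\Div X)\circ f$ whenever $f$ is a volume-preserving diffeomorphism, which follows from the naturality of the Lie derivative together with invariance of the volume form under $f$; I expect this coordinate-free reformulation to be the cleanest obstacle to navigate in the written-out proof.
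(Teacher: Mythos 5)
Your proof is correct and follows essentially the same route as the paper's: integrate by parts, unwind the definition $\tilde b=(T\tilde g)^{-1}\circ b\circ\tilde g$ via the chain rule $d\phi(y)=d(\phi\circ\tilde g(t)^{-1})(\tilde g(t)(y))\circ T_y\tilde g(t)$, change variables using that $\tilde g(t)$ is measure preserving, and integrate by parts back. The only cosmetic difference is the sign convention in the integration-by-parts identity, which is immaterial here since it is applied twice and cancels.
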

\begin{proof}
We will write $b(y)=b(t,y,\om)$,  $\tilde b(x)=\tilde b(t,x,\om)$
For $\phi\in C^\infty(M)$,
 \begin{align*}
  \int_M(\Div\tilde b)\phi&=\int_M\langle d\phi,\tilde b\rangle\\
&=\int_M\left\langle d\phi, (T\tilde g)^{-1}\circ b\circ \tilde g\right\rangle\\
&=\int_M \left\langle d\left(\phi\circ (\tilde g)^{-1}\right)(\tilde g(x)),  b(\tilde g(x))\right\rangle\,dx\\
&=\int_M \left\langle d\left(\phi\circ (\tilde g)^{-1}\right)(y),  b(y)\right\rangle\,dy\\
&=\int_M\left(\phi\circ (\tilde g)^{-1}\right)(y)(\Div b)(y)\,dy
 \end{align*}
where we used in the fourth equality the fact that $\tilde g$ is measure preserving.
\end{proof}

Now consider a deterministic drift $ b(t,x)$ such that $b\in L^1([0,T], L^q (TM))$ for some $q>1$, and $\Div b\equiv 0$ in the weak sense. It is easily seen that Lemma~\ref{L1} is still valid for $b$, so we have a.s. $\Div \tilde b\equiv 0$. Moreover a.s. $\tilde b\in L^1([0,T], L^q (TM))$. Under this condition we can apply Proposition~II.1 in \cite{DiPerna-Lions:89} and we deduce that a.s. the transport equation which is the weak version of~\eqref{E50}, namely
\begin{equation}
 \label{E57}
\f{\partial \theta_t}{\partial t}=-(\tilde b\cdot \n)\theta_t,\qquad \theta_0=\varphi, \qquad \varphi\in C^\infty(M),
\end{equation}
has a solution $\theta_t^{\tilde b,\varphi}$ in $\di \bigcap_{p\ge 1} L^\infty(0,T;L^p(M))$. Moreover since $\tilde b$ is adapted the process   $\theta_t^{\tilde b,\varphi}$ can also be chosen so that all $\theta_t^{\tilde b,\varphi}, \varphi\in C^\infty(M)$ and also $\tilde g, W$ are jointly adapted to the same filtration for which $W$ is still a cylindrical Brownian motion and $\tilde g$ satisfies~\eqref{E47}: instead of considering $\theta_t^{\tilde b,\varphi}$ as the limit in law of some regularized processes $\theta_t^{\tilde b^\e,\varphi}$, consider $((\theta_t^{\tilde b,\tilde\varphi^j},\tilde g, W)$ as limit in law of $((\theta_t^{\tilde b^\e,\tilde\varphi^j},\tilde g, W)$ for $(\tilde\varphi^j)_{j\ge 1}$ a dense subsequence in $C^\infty(M)$.

 So by analogy to~\eqref{E54} we define 
\begin{equation}
 \label{E58}
\Theta_t^{\s,b}(\varphi,\phi)=\Theta_t^{\tilde g}\left(\theta_t^{\tilde b,\varphi},\phi\right),\qquad \varphi,\phi\in C^\infty(M)
\end{equation}

\begin{prop}
 \label{P2}
 Take a deterministic drift $ b(t,x)$ such that $b\in L^1([0,T], L^q (TM))$ for some $q>1$, and $\Div b\equiv 0$ in the weak sense. Then the generalized flow 
$\Theta^{\s,b}$ defined in equation~\eqref{E58} is a generalized flow with kinetic energy
\begin{equation}
 \label{E62}
\SE_q'\left(\Theta^{\s,b}\right)\le\SE_q(b)
\end{equation}
where
\begin{equation}
 \label{E63}
\SE_q(b)=\f12\int_0^Tdt\int_Mdx\|b(t,x)\|^q
\end{equation}

\end{prop}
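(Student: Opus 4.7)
The plan is to verify the eight conditions of Definition~\ref{D1} for $\Theta^{\sigma,b}$ by a combination of direct inspection of~\eqref{E58}, exploiting DiPerna--Lions properties of the transport solutions $\theta^{\tilde b,\varphi}$, and regularising $b$ to reduce to the setting of Proposition~\ref{P1}.

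\textbf{Structural properties.} First I would verify conditions (2), (3), (4), (6), (7), (8) directly from~\eqref{E58}, using the identity $\Theta^{\tilde g}_t(f,\phi)=(f\circ\tilde g(t)^{-1},\phi)_{L^2(M)}$ and the fact that $\tilde g(t)$ is measure preserving. These all reduce to properties of $\theta^{\tilde b,\varphi}$: nonnegativity and linearity are preserved by the transport equation~\eqref{E57}, renormalised DiPerna--Lions solutions preserve $L^p$-norms and total mass (in particular $\theta^{\tilde b,1}\equiv 1$), and the bounded variation of $t\mapsto\theta^{\tilde b,\varphi}_t$ means its contribution to quadratic variations vanishes, so the bracket identity~\eqref{E17} for $\Theta^{\sigma,b}$ is inherited from the analogous identity for $\Theta^{\tilde g}$ (which is~\eqref{E8} applied to $b\equiv 0$).

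\textbf{Drift identity and energy via regularisation.} For the drift identity~\eqref{4.1} and the energy bound, I would approximate $b$ by smooth divergence-free $b^\epsilon$ in $L^1([0,T],L^2(TM))$, as in the proof of Theorem~\ref{T4.1}. When $b^\epsilon$ is smooth, the Ocone--Pardoux decomposition~\eqref{E48} is a classical ODE, and $\theta^{\tilde b^\epsilon,\varphi}_t$ coincides with $\varphi\circ(\psi^\epsilon(t))^{-1}$. Combined with~\eqref{E54}, this yields the key identification $\Theta^{\sigma,b^\epsilon}=\Theta^{g^\epsilon}$, where $g^\epsilon$ is the semimartingale flow of~\eqref{9} driven by $b^\epsilon$. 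Proposition~\ref{P1} then gives $\SE'(\Theta^{\sigma,b^\epsilon})=\SE(g^\epsilon)=\tfrac{1}{2}\int_0^T\!\int_M\|b^\epsilon\|^2$, which tends to $\SE(b)$, and property~(5) holds for $\Theta^{\sigma,b^\epsilon}$ with $D\tilde\Theta^{\sigma,b^\epsilon}_t(\varphi,\phi)=\Theta^{\sigma,b^\epsilon}_t(\varphi,\Div(\phi b^\epsilon))$. DiPerna--Lions stability provides $\theta^{\tilde b^{\epsilon_n},\varphi}_t\to\theta^{\tilde b,\varphi}_t$ in $L^p$ along a subsequence, which transfers to $\Theta^{\sigma,b^{\epsilon_n}}_t(\varphi,\phi)\to\Theta^{\sigma,b}_t(\varphi,\phi)$ and identifies the limiting drift with~\eqref{4.1}. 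Meyer--Zheng lower semi-continuity, exactly as in the proof of Theorem~\ref{T1}, then delivers $\SE'(\Theta^{\sigma,b})\le \liminf_n\SE'(\Theta^{\sigma,b^{\epsilon_n}})=\SE(b)$.

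\textbf{Main obstacle.} The delicate point is passing to the limit in the drift $\Theta^{\sigma,b^{\epsilon_n}}_\cdot(\varphi,\Div(\phi b^{\epsilon_n}))$: since $b^{\epsilon_n}\to b$ only in $L^2$, the object $\Div(\phi b)$ must be interpreted in a weak sense, and one has to invoke the $L^2$-extension of $(\phi,\beta)\mapsto\Theta_t(\phi,\beta)$ constructed around~\eqref{phibeta} in the proof of Theorem~\ref{T4.1}. An attractive alternative, avoiding the limiting procedure altogether, applies Cauchy--Schwarz directly to~\eqref{4.1}: using $\theta^{\tilde b,\varphi^j}\ge 0$, $\sum_j\theta^{\tilde b,\varphi^j}=\theta^{\tilde b,1}\equiv 1$ (by linearity of~\eqref{E57}) and $\sum_k\langle d\phi^k,v\rangle^2\le\|v\|^2$, one obtains the pathwise bound $\int_M\|b(t,x)\|^2\,dx$, which integrates to $\SE(b)$ and recovers the inequality~\eqref{E62} by taking the supremum in~\eqref{E19}, mirroring the computation already performed in the first half of the proof of Proposition~\ref{P1}.
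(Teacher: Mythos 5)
Your proposal is correct and follows essentially the same route as the paper: the paper verifies the structural properties by computing the explicit semimartingale decomposition of $\tilde\Theta^{\s,b}(\varphi,\phi)$ (its equation~\eqref{E60}, which is your ``bounded variation of $\theta^{\tilde b,\varphi}$ plus the bracket of $\Theta^{\tilde g}$'' observation), and then obtains the energy bound by a regularization of the drift together with the Cauchy--Schwarz estimate from the first half of Proposition~\ref{P1} --- precisely the two routes you describe, the paper merely leaving the details to the reader. Your filling-in is sound, and your direct Cauchy--Schwarz argument (using $\theta^{\tilde b,\varphi^j}\ge 0$, $\sum_j\theta^{\tilde b,\varphi^j}\equiv 1$ and the measure-preservation of $\tilde g$) is in fact the cleaner of the two and is the one the paper itself invokes at the end of the proof of Theorem~\ref{T4.1}.
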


\begin{proof}
 We have 
\begin{equation}
\label{E59}
\begin{split}
\Theta^{\s,b}(\varphi,\phi)=\Theta_t^{\tilde g}\left(\theta_t^{\tilde b,\varphi},\phi\right)&=
\int_M\theta_t^{\tilde b,\varphi}\left((\tilde g(t))^{-1}(x)\right)\phi(x)\,dx\\
&=\int_M\theta_t^{\tilde b,\varphi}\left(x\right)\phi\left(\tilde g(t)(x)\right)\,dx
\end{split}
\end{equation}
and this implies
\begin{equation}
\label{E60}
\begin{split}
&d\tilde\Theta^{\s,b}(\varphi,\phi)\\&=\int_M\theta_t^{\tilde b,\varphi}\left(x\right)\left\langle d\phi,\ d^{\hbox{\sevenrm It\^o}}\tilde g(t)(x)\right\rangle\,dx+\int_M\theta_t^{\tilde b,\varphi}\left(x\right)\Div \left(\tilde b (\phi\circ\tilde g(t))\right)(x)\,dx\,dt\\
&=\int_M\theta_t^{\tilde b,\varphi}\left(x\right)\left\langle d\phi,\s\left(\tilde g(t)(x)\right) dW_t\right\rangle\,dx+\int_M\theta_t^{\tilde b,\varphi}\left(x\right)\left\langle d (\phi\circ\tilde g(t)), \ \tilde b\right\rangle(x)\,dx\,dt\\
&=\sum_{i\ge 1}\int_M\theta_t^{\tilde b,\varphi}\left(x\right)\left\langle d\phi,\s_i\right\rangle\left(\tilde g(t)(x)\right)\,dx\,dW_t^i+\int_M\theta_t^{\tilde b,\varphi}\left(x\right)\left\langle d \phi, b\right\rangle(\tilde g(t)(x))\,dx\,dt\\
&= \sum_{i\ge 1}\int_M\theta_t^{\tilde b,\varphi}\left((\tilde g(t))^{-1}(y)\right)\left\langle d \phi, \s_i\right\rangle(y)\,dydW_t^i+\int_M\theta_t^{\tilde b,\varphi}\left((\tilde g(t))^{-1}(y)\right)\left\langle d \phi, b\right\rangle(y)\,dy\,dt\\
&= \sum_{i\ge 1}\Theta^{\s,b}(\varphi, \langle d\phi,\s_i\rangle )\,dW_t^i+\Theta^{\s,b}(\varphi, \langle d\phi,b\rangle )\,dt
\end{split}
\end{equation}
where the first term in the right is the martingale part and the second term is the finite variation part. 
We prefer to write the last equality as 
\begin{equation}
\label{E61}
d\tilde\Theta^{\s,b}(\varphi,\phi)=\sum_{i\ge 1}\Theta^{\tilde g}\left(\theta_t^{\tilde b,\varphi}, \langle d\phi,\s_i\rangle \right)\,dW_t^i+\Theta^{\tilde g}\left(\theta_t^{\tilde b,\varphi}, \langle d\phi,b\rangle \right)\,dt.
\end{equation}
>From this equation the properties of a generalized flow are easily checked.

We are left to prove that $\SE_q'(\Theta^{\s,b})\le \SE_q(b)$. Again this can be done via a regularization procedure of $\tilde b$ of the form $\tilde b^\e=\left(e^{\e\square}(\tilde b^\flat)\right)^\sharp$, an extraction of subsequence, and similar estimates as before. We leave the details to the reader.

\end{proof}

\medbreak\noindent{\bf Acknowledgements}

The research of A.Antoniouk was supported by the grant no. 01-01-12
of National Academy of Sciences of Ukraine (under the joint
Ukrainian-Russian project of NAS of Ukraine and Russian
Foundation of Basic Research)."

This research was also supported by the French ANR grant ANR-09-BLAN-0364-01 ProbaGeo
   and by the Portuguese FCT project PTDC/MAT/104173/2008.\rm

\end{document}